\documentclass{amsart}

\usepackage{amssymb}
\usepackage{graphicx}
\usepackage{hyperref}
\usepackage{color}
\usepackage{verbatim}

\newtheorem{theorem}{Theorem}
\newtheorem{lemma}[theorem]{Lemma}

\newtheorem{corollary}[theorem]{Corollary}
{\theoremstyle{definition}
\newtheorem{example}[theorem]{Example}
\newtheorem{remark}[theorem]{Remark}
}

\newcommand{\R}{\ensuremath{\mathbb{R}}}

\newcommand{\Z}{\ensuremath{\mathbb{Z}}}
\newcommand{\N}{\ensuremath{\mathbb{N}}}
\newcommand{\X}{\mathcal{X}}

\hyphenation{Bar-ce-lo-na iso-chro-nous}

\begin{document}

\title[Global centers and global injectivity]
{On the connection between global centers and global injectivity in the plane}

\author[F. Braun]{Francisco Braun}
\address{Departamento de Matem\'{a}tica, Universidade Federal de S\~{a}o Carlos, 
13565--905 S\~{a}o Carlos, S\~{a}o Paulo, Brazil}
\email{franciscobraun@dm.ufscar.br}

\author[J. Llibre]{Jaume Llibre}
\address{Departament de Matem\`{a}tiques, Universitat Aut\`{o}noma de Barcelona, 
08193 Be\-lla\-ter\-ra, Barcelona, Catalonia, Spain}
\email{jllibre@mat.uab.cat}

\subjclass[2010]{Primary: 34C25; Secondary: 14R15.}

\keywords{Centers, global injectivity, real Jacobian conjecture}

\date{\today}

\begin{abstract}
In this note we present a generalization of a result of Sabatini relating global injectivity and global centers. 
The shape of the image of the map is taking into account. 
Our proofs do not use Hadamard's theorem. 
\end{abstract}

\maketitle

\section{Introduction and statement of the main results}
Throughout our exposition $U\subset \R^2$ will be an open connected set.

Let $X,Y : U\to\R$ be $C^k$ functions for some $k\in \N$. 
We consider the vector field $\X = (X,Y)$, or equivalently the system of differential equations
\begin{equation}\label{generalsystem}
\dot{x} = X(x,y),\ \ \ \ \dot{y} = Y(x,y).
\end{equation}
Let $z_0$ be an isolated singular point of system \eqref{generalsystem}. 
We say that $z_0$ is a \emph{center} of \eqref{generalsystem} when there exists a neighborhood $V$ of $z_0$, $V\subset U$, such that each orbit of \eqref{generalsystem} in $V \backslash \{z_0\}$ is periodic. 
We define the \emph{period annulus} of center $z_0$, denoting it by $\mathcal{P}_{z_0}$, as the maximal open connected set $W \subset U$ such that $W\setminus \{z_0\}$ is filled with periodic orbits of $\mathcal{X}$. 
We say that the center is \emph{global} when $\mathcal{P}_{z_0} = U$. 
We say that the center is \emph{isochronous} when the orbits in $\mathcal{P}_{z_0}$ have the same period. 

When the singular point $z_0$ is non-degenerate, i.e. the determinant of the linear part of $\X$ in $z_0$ is different from zero, in order to have a center it is necessary that the eingenvalues of $D\X (z_0)$ are purely imaginary. 
In this case we will say that the center $z_0$ is \emph{non-degenerate}.

Let $H: U\to\R$ be a $C^{k+1}$ function. 
We say that $H$ is the \emph{Hamiltonian} of system \eqref{generalsystem} if 
\begin{equation*}\label{Hamiltonian}
X(x,y) = - H_y (x,y), \ \ \ \ Y(x,y) = H_x (x,y).
\end{equation*}
In this case we call system \eqref{generalsystem} the \emph{Hamiltonian system} associated to the Hamiltonian $H$. 
We also denote $\X = \nabla H^{\perp}$.

\smallskip

The following result provides a simple way to produce non-degenerate Hamiltonian centers. 
Let $f = (f_1, f_2): U \to\R^2$. 
We denote by $H_f: U\to \R$ the Hamiltonian defined by 
\begin{equation}\label{h3}
H_f(x,y) = \frac{f_1(x,y)^2 + f_2(x,y)^2}{2}, 
\end{equation}
for each $(x,y) \in U$.

\begin{lemma}\label{isolatedminimum}
Let $f = (f_1,f_2): U \to \R^2$ be a $C^2$ map. 
If $z_0 \in U$ is such that $\det D f(z_0) \neq 0$, then $z_0$ is a singular point of the Hamiltonian vector field $\nabla H_f^\perp$ if and only if $f(z_0) = (0,0)$. 
In this case, this singular point $z_0$ is a non-degenerate center of $\nabla H_f^{\perp}$ and also an isolated global minimum of $H_f$. 
In particular, if 
\begin{equation}\label{det2}
\det D f = {f_1}_x {f_2}_y - {f_2}_x {f_1}_y\neq 0
\end{equation}
in $U$, then the singular points of $\nabla H_f^{\perp}$ are non-degenerate centers and correspond to the zeros of $f$. 
\end{lemma}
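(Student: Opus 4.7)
The plan is to compute $\nabla H_f$ explicitly and then analyze the linearization of $\nabla H_f^{\perp}$ at $z_0$ through the Hessian of $H_f$. Differentiating \eqref{h3} gives $\nabla H_f(z)=Df(z)^{\top}f(z)$, viewed as a column. Since singular points of $\nabla H_f^{\perp}$ are exactly zeros of $\nabla H_f$, the singular point equation at $z_0$ reads $Df(z_0)^{\top}f(z_0)=0$. Under the hypothesis $\det Df(z_0)\ne 0$, the matrix $Df(z_0)^{\top}$ is invertible, yielding $f(z_0)=(0,0)$; the converse is immediate. This settles the equivalence stated in the first sentence.

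Assuming now $f(z_0)=0$, I would evaluate the Hessian of $H_f$ at $z_0$. The second-derivative terms of $f_1,f_2$ are multiplied by $f_i(z_0)=0$ and drop out, leaving $\operatorname{Hess} H_f(z_0)=Df(z_0)^{\top}Df(z_0)$, which is positive definite with determinant $(\det Df(z_0))^2>0$. For any planar Hamiltonian system $\nabla H^{\perp}$, a direct computation shows that the linearization at a critical point has trace zero and determinant $\det\operatorname{Hess} H$; plugging in, the eigenvalues of $D(\nabla H_f^{\perp})(z_0)$ are purely imaginary and nonzero, which already gives non-degeneracy in the sense fixed earlier. To upgrade this to a center I would use that $H_f$ is a first integral of $\nabla H_f^{\perp}$ and has a strict local minimum at $z_0$: by Morse's lemma the level sets $\{H_f=c\}$ for $c>0$ small are simple closed curves encircling $z_0$, they are flow-invariant, and contain no singular points (the inverse function theorem applied to $f$ shows $z_0$ is the only zero of $f$ nearby), so each such level set supports a periodic orbit and $z_0$ is a center.

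Finally, $H_f=\tfrac{1}{2}|f|^{2}\ge 0$ with equality at $z_0$, so $z_0$ is a global minimum, and the inverse function theorem (already invoked) shows that no other zero of $H_f$ lies in a small neighborhood, whence the minimum is isolated. The \emph{in particular} clause follows by applying the equivalence and the analysis at every singular point of $\nabla H_f^{\perp}$. The main subtlety of the plan is the center-versus-focus step, where the positive definiteness of $\operatorname{Hess} H_f(z_0)$ combined with the first-integral property of $H_f$ is essential; the remaining steps reduce to the chain rule, the identity $\det(A^{\top}A)=(\det A)^2$, and the inverse function theorem.
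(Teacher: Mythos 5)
Your proposal is correct and follows essentially the same route as the paper: both establish the equivalence via invertibility of $Df(z_0)^{\top}$, verify that the linearization of $\nabla H_f^{\perp}$ at $z_0$ has zero trace and determinant $(\det Df(z_0))^2>0$ (you phrase this through $\operatorname{Hess}H_f(z_0)=Df(z_0)^{\top}Df(z_0)$, the paper writes the matrix out explicitly), and conclude the center property from $H_f$ being a first integral with an isolated minimum at $z_0$. The Morse-lemma justification of the closed level curves is a slightly more explicit version of the paper's closing sentence, but the argument is the same.
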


In case the Jacobian determinant of $f$ in $U$ is a non-zero constant, it follows that the center $z_0$ is isochronous, see Theorem 2.1 of \cite{S1}. 
See also Theorem B of \cite{MV} for the characterization of the analytic Hamiltonian isochronous centers as being the ones such that locally the Hamiltonian has the form $H_f$, with $f$ having non-zero constant Jacobian determinant.

\smallskip 

When $f : \R^2 \to \R^2$ is a polynomial map satisfying \eqref{det2} and such that $f(0,0) = (0,0)$, Sabatini proved in \cite{S1} that $f$ is a global diffeomorphism if and only if the center $(0,0)$ of $\nabla H_f^{\perp}$ is global. 
See an application of this result to the real Jacobian conjecture in \cite{BGL}. 
The connection between injectivity of maps and centers also appears in \cite{S2}, where there are results relating the injectivity of $C^2$ maps $f : \R^2 \to \R^2$ having non-zero constant Jacobian determinant to the area of the period annulus of a center of $\nabla H_f^{\perp}$. 
In the same paper \cite{S2}, the injectivity of $f$ is also related to the property that some vector fields other than $\nabla H_f^{\perp}$ are complete, without assuming that the Jacobian determinant of $f$ is constant. 
In \cite{G} Gavrilov studied a connection between centers and injectivity in the complex context. 

\smallskip

The main aim of this note is the following extension of some of the above-mentioned results for $C^2$ maps defined in connected open sets of $\R^2$. 

\begin{theorem}\label{main}
Let $f: U \to \R^2$ be a $C^2$ map satisfying \eqref{det2} and $z_0 \in U$ such that $f(z_0) = (0, 0)$. 
The center $z_0$ of the Hamiltonian vector field $\nabla H_f^{\perp}$ is global if and only if (i) $f$ is injective and (ii) $f(U) = \R^2$ or $f(U)$ is an open disc centered at $(0,0)$. 
\end{theorem}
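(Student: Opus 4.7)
I begin with the easier $(\Leftarrow)$ direction. If $f$ is injective then, by \eqref{det2} and the inverse function theorem, $f\colon U\to f(U)$ is a $C^2$ diffeomorphism. A direct computation gives
\[
Df(z)\cdot\X(z)=\det Df(z)\cdot V(f(z)),
\]
where $V(w)=(-w_2,w_1)$ is the standard rotational field on $\R^2$. If $f(U)$ is $\R^2$ or an open disc $B_R=\{|w|<R\}$ centered at $0$, it is rotationally invariant, so every orbit of $V$ on $f(U)$ is a full circle about $0$. Conjugating back via $f$, every orbit of $\X$ in $U\setminus\{z_0\}$ is periodic, i.e., $z_0$ is a global center.

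For $(\Rightarrow)$, assume the center is global and let $J=\det Df$, of constant sign on $U$; without loss of generality $J>0$. Set $\tilde V=\X/J$, so its flow $\phi^t$ satisfies $f\circ\phi^t=R_t\circ f$, where $R_t$ is rotation by angle $t$. The orbits of $\tilde V$ coincide with those of $\X$ and are periodic by hypothesis. If a $\tilde V$-orbit has period $T>0$, then $R_T\circ f=f$ on the orbit forces $T\in 2\pi\Z_{>0}$. Since $f$ is a local diffeomorphism near $z_0$, the small orbits have period $2\pi$; by continuity of the period function on the connected set $U\setminus\{z_0\}$ and discreteness of $2\pi\Z$, the period is identically $2\pi$. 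Consequently every periodic orbit $\gamma$ maps via $f$ bijectively onto the circle $|w|=|f(z)|$, $z\in\gamma$, and $f(U)$ is a rotationally symmetric open connected set containing $0$, hence $f(U)=B_R$ with $R\in(0,\infty]$.

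It remains to prove that $f$ is injective. Since $f^{-1}(0)=\{z_0\}$ and $f$ is already injective on each periodic orbit, it suffices to show that each level set $r^{-1}(c)$, $0<c<R$, of $r(z)=|f(z)|$ is a single orbit. The $2\pi$-periodicity of $\phi^t$ gives a smooth free action of $S^1$ on $U\setminus\{z_0\}$, proper since $S^1$ is compact; the quotient manifold theorem then produces a Hausdorff connected smooth $1$-manifold $Q=(U\setminus\{z_0\})/S^1$, onto which $r$ descends to a smooth $\bar r\colon Q\to(0,R)$. As in the proof of Lemma~\ref{isolatedminimum}, $\nabla r$ vanishes only at $z_0$, so $r$ is a submersion on $U\setminus\{z_0\}$ and hence $\bar r$ is a surjective local diffeomorphism. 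Its image $(0,R)$ is open and noncompact, so $Q$ cannot be $S^1$; thus $Q\cong\R$, and the surjective local diffeomorphism $\bar r\colon\R\to(0,R)$ is strictly monotone and hence injective. Every level of $r$ is then a single orbit, and $f$ is injective.

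The main obstacle is this last injectivity step: bijectivity on one orbit is immediate from the conjugation to rotation, but upgrading to global injectivity demands control over the orbit space. The key is to combine the $S^1$-symmetry arising from the degree-$1$ conclusion with the submersive radius function $r=|f|$, reducing the problem to the classification of connected $1$-manifolds.
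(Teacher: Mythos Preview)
Your proof is correct and follows a genuinely different route from the paper's. The paper splits the theorem into two lemmas that work directly with the level sets of $H_f$: for the forward implication it first shows, by an elementary annulus/transversal argument, that each level set $H_f^{-1}\{h\}$ consists of a single periodic orbit, and then proves that $f$ is injective on each such orbit via an infimum/compactness argument on the set of ``bad'' levels $h$. You instead exploit the identity $Df\cdot\X=(\det Df)\,V\circ f$ to conjugate the rescaled field $\tilde V=\X/J$ to the standard rotation; this forces every $\tilde V$-period into $2\pi\Z$, and local injectivity of $f$ together with connectedness of $U\setminus\{z_0\}$ pin the period at $2\pi$, which gives injectivity of $f$ on each orbit for free. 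For the remaining global step (one orbit per level) you replace the paper's hands-on topology by the quotient manifold theorem for the resulting free $S^1$-action and the classification of connected $1$-manifolds. Your approach is conceptually cleaner and explains \emph{why} the result holds (the system is, up to reparametrization, literally a rotation in $f$-coordinates), at the price of invoking heavier machinery; the paper's argument is completely elementary and self-contained. One minor point worth tightening: you invoke ``continuity of the period function'' of $\tilde V$ without comment; this is standard (the first-return time along a transversal is $C^1$ by the implicit function theorem, since all nearby orbits are closed), but a one-line justification would be appropriate.
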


In case $f: \R^2 \to \R^2$ is a polynomial injective map, it follows that $f\left(\R^2\right) = \R^2$, see for instance \cite{BR}. 
Therefore our Theorem \ref{main} generalizes the above-mentioned result of \cite{S1}. 

\begin{corollary}\label{cmain}
Let $f: U \to \R^2$ be a $C^2$ map satisfying \eqref{det2} and $z_0 \in U$ such that $f(z_0) = (0,0)$. 
Then (i) $f$ is injective in $\overline{\mathcal{P}_{z_0}}$, where $\overline{\mathcal{P}_{z_0}}$ is the closure of $\mathcal{P}_{z_0}$ in $U$, and 
(ii) $f \left(\mathcal{P}_{z_0}\right) = \R^2$ or $f \left(\mathcal{P}_{z_0} \right)$ is an open disc centered at $(0,0)$. 
\end{corollary}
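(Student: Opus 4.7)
The plan is to apply Theorem~\ref{main} to the restriction of $f$ to the period annulus $\mathcal{P}_{z_0}$ and then use local invertibility at boundary points to extend the conclusions to $\overline{\mathcal{P}_{z_0}} \cap U$.

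First, I would note that the restriction $f|_{\mathcal{P}_{z_0}}$ satisfies the hypotheses of Theorem~\ref{main}: $\mathcal{P}_{z_0}$ is open and connected, the restricted map is $C^2$ and inherits~\eqref{det2}, and $z_0$ is a \emph{global} center of $\nabla H_f^{\perp}$ viewed on $\mathcal{P}_{z_0}$, since by definition every orbit in $\mathcal{P}_{z_0} \setminus \{z_0\}$ is periodic and contained in $\mathcal{P}_{z_0}$. Theorem~\ref{main} then gives statement (ii) together with the fact that $f|_{\mathcal{P}_{z_0}}$ is a diffeomorphism onto $D := f(\mathcal{P}_{z_0})$. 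Denote by $\phi : D \to \mathcal{P}_{z_0}$ its continuous inverse.

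The basic observation for (i) is this: if $q \in \overline{\mathcal{P}_{z_0}} \cap U$ and $f(q) \in D$, then $q \in \mathcal{P}_{z_0}$. Indeed, choosing $q_n \in \mathcal{P}_{z_0}$ with $q_n \to q$ and using the continuity of $\phi$, one gets $q_n = \phi(f(q_n)) \to \phi(f(q)) \in \mathcal{P}_{z_0}$. In particular, $f$ maps $\partial \mathcal{P}_{z_0} \cap U$ into $\partial D$. When $D = \R^2$ this forces $\partial \mathcal{P}_{z_0} \cap U = \emptyset$ and we are done; when $D$ is a bounded open disc, any failure of injectivity in $\overline{\mathcal{P}_{z_0}} \cap U$ must occur between two distinct points $p, q \in \partial \mathcal{P}_{z_0} \cap U$ with $f(p) = f(q) \in \partial D$, since $f(\mathcal{P}_{z_0}) \subset D$ is disjoint from $\partial D$.

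The main obstacle is ruling out this last configuration. My approach is to choose small disjoint neighborhoods $V_p, V_q$ of $p, q$ on which $f$ restricts to a diffeomorphism onto a common open ball $B$ centered at $f(p) = f(q)$. A connectedness argument then shows that the local inverse $(f|_{V_q})^{-1}$ coincides with $\phi$ on the convex lens $B \cap D$: the locus of agreement is clopen in $B \cap D$ and non-empty because $V_q$ meets $\mathcal{P}_{z_0}$. Hence any $r \in V_q$ with $f(r) \in D$ automatically lies in $\mathcal{P}_{z_0}$, and similarly for $V_p$. Picking any $w \in B \cap D$, which is non-empty since $f(p) \in \partial D$, produces two distinct preimages of $w$ inside $\mathcal{P}_{z_0}$---one in $V_p$, one in $V_q$---contradicting the injectivity of $f|_{\mathcal{P}_{z_0}}$. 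This forces $p = q$, and (i) holds.
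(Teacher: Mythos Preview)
Your proof is correct and follows essentially the same approach as the paper: apply Theorem~\ref{main} to $f|_{\mathcal{P}_{z_0}}$ to obtain (ii) and injectivity on $\mathcal{P}_{z_0}$, then use disjoint local-diffeomorphism neighborhoods at two putative boundary points with the same image to manufacture a violation of injectivity inside $\mathcal{P}_{z_0}$. The only cosmetic difference is that the paper separates boundary from interior via the inequality $H_f(z) > h$ for $z \in \partial\mathcal{P}_{z_0}$ and then invokes the transversal-section argument from Lemma~\ref{fgtr}, whereas you argue directly with the continuous inverse $\phi$ and a connectedness argument on $B \cap D$; both routes yield the same contradiction.
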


In case $U = \R^2$ and the Jacobian determinant of $f$ is $1$, the statement (i) of Corollary \ref{cmain} already appeared in \cite{S1} as Corollary 2.2. 

\smallskip

The following estimates the size of the period annulus $\mathcal{P}_{z_0}$.  

\begin{corollary}\label{cmain2}
Let $f: U \to \R^2$ be a $C^2$ map satisfying \eqref{det2} and $z_0 \in U$ such that $f(z_0) = (0, 0)$. 
Then $\mathcal{P}_{z_0}$ is the greatest open connected set containing $z_0$ such that (i) $f$ is injective in it and (ii) its image under $f$ is $\R^2$ or an open disc centered at $(0,0)$. 
\end{corollary}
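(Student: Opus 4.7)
The plan is to combine Corollary \ref{cmain}, which provides the existence part, with Theorem \ref{main} applied to restrictions of $f$, which will force maximality. Indeed, Corollary \ref{cmain} already shows that $\mathcal{P}_{z_0}$ is an open connected set containing $z_0$ on which $f$ is injective and whose image under $f$ is either $\R^2$ or an open disc centered at $(0,0)$. What remains is to prove that any other set with these two properties is contained in $\mathcal{P}_{z_0}$.

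To that end, I would fix an arbitrary open connected subset $V$ of $U$ with $z_0 \in V$ on which $f|_V$ is injective and with $f(V)$ equal to $\R^2$ or an open disc centered at the origin. The idea is to apply Theorem \ref{main} to the restriction $f|_V : V \to \R^2$. This is a $C^2$ map, condition \eqref{det2} still holds on $V$ since it holds on $U$, and $f(z_0) = (0,0)$; the hypotheses (i) and (ii) of Theorem \ref{main} for $f|_V$ hold by the choice of $V$. Consequently, $z_0$ is a global center of the Hamiltonian vector field $\nabla H_{f|_V}^\perp$ on $V$.

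The next step is to notice that $H_{f|_V}$ is simply the restriction of $H_f$ to $V$, so $\nabla H_{f|_V}^\perp$ coincides with $\nabla H_f^\perp$ on $V$. Hence every point of $V \setminus \{z_0\}$ lies on a periodic orbit of $\nabla H_f^\perp$ entirely contained in $V \subset U$. The union $V \cup \mathcal{P}_{z_0}$ is then a connected open subset of $U$ containing $z_0$ for which $(V \cup \mathcal{P}_{z_0}) \setminus \{z_0\}$ is filled with periodic orbits of $\nabla H_f^\perp$; by the maximality clause in the definition of period annulus, this union must coincide with $\mathcal{P}_{z_0}$, giving $V \subset \mathcal{P}_{z_0}$.

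The only subtlety is verifying that the periodic orbits supplied by Theorem \ref{main} applied to $f|_V$ are genuine periodic orbits of the ambient field on $U$, rather than orbits of the restricted system that happen to be trapped in $V$. This is immediate since the two fields agree on $V$ and the orbits lie entirely in $V$. Beyond this bookkeeping, the argument is a direct application of Theorem \ref{main} to restricted domains.
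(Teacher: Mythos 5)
Your proposal is correct and follows essentially the same route as the paper: invoke Corollary \ref{cmain} for the existence part, then apply Theorem \ref{main} to the restriction $f|_V$ to see that $V\setminus\{z_0\}$ is filled with periodic orbits of $\nabla H_f^\perp$ contained in $V$, whence $V\subset\mathcal{P}_{z_0}$ by maximality of the period annulus. The extra remark that the orbits of the restricted field are genuine periodic orbits of the ambient field is a point the paper leaves implicit, and it is handled correctly.
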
 

\smallskip

We observe that in our proofs it is not possible to use the classical Hadamard result of global invertibility of maps, that a local diffeomorphism $F : B \to B$, where $B$ is a Banach space, is a global one if and only if $F$ is proper. 
This is because our domain is just an open connected set, and our maps can be not surjective.

\smallskip 

We prove the results in section \ref{prsec} and present examples to them in section \ref{exsec}. 
We also study the special case where $H_f$ is polynomial in section \ref{final}.

\section{Proof of the results}\label{prsec} 
\begin{proof}[Proof of Lemma \ref{isolatedminimum}] 
Observe that $\nabla H_f^{\perp}(z_0) = (0,0)$ is equivalent to $Df(z_0) f(z_0) = (0,0)$. 
Since $D f(z_0)$ is invertible, it follows that $z_0$ is a singular point of $\nabla H_f^{\perp}$ if and only it is a zero of $f$. 

Assume so that $f(z_0) = (0,0)$. 
Since $f$ is locally injective, it follows that $f(z) \neq (0,0)$ for $z$ close enough to $z_0$, and so $z_0$ is an isolated global minimum of $H_f = \left(f_1^2 + f_2^2\right)/2$. 

The linear part of $\nabla H_f^\perp$ in $z_0$ is 
$$
D \nabla H_f^{\perp}(z_0) = \left(\begin{array}{cc}
- {f_1}_x {f_1}_y - {f_2}_x {f_2}_y  &  - {f_1}_y^2 - {f_2}_y^2 \\
{f_1}_x^2 + {f_2}_x^2  &  {f_1}_x {f_1}_y + {f_2}_x {f_2}_y.
\end{array}\right).
$$
Since $\det \left( D \nabla H_f^{\perp} \right) = \left(\det Df \right)^2 > 0$, we conclude that $z_0$ is a non-degenerate singularity and that the eigenvalues of $D \nabla H_f^{\perp}(z_0)$ are purely imaginary, because $\textnormal{tr} \left( D \nabla H_f^{\perp} \right) = 0$. 
Since the orbits of $\nabla H_f^{\perp}$ are contained in the level sets of $H_f$, and $z_0$ is an isolated minimum of $H_f$,  we conclude that $z_0$ is a center of this vector field.  \end{proof}

The proof of Theorem \ref{main} is a straightforward consequence of the following two lemmas.

\begin{lemma}
Let $f: U \to \R^2$ be an injective $C^2$ map satisfying \eqref{det2} and $z_0 \in U$ be such that $f(z_0) = (0,0)$. 
The center $z_0$ of the Hamiltonian vector field $\nabla H_f^{\perp}$ is global if and only if $f(U) = \R^2$ or $f(U)$ is an open disc centered at $(0,0)$.  
\end{lemma}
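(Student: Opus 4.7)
The plan rests on the observation that, being an injective $C^2$ map with nonvanishing Jacobian by \eqref{det2}, $f$ is a diffeomorphism onto its open image $V:=f(U)$, and that $H_f(z)=|f(z)|^2/2$, so the level sets of $H_f$ in $U$ correspond under $f$ to the intersections of geometric circles $\{|w|=r\}$ centered at the origin with $V$. The question of whether every orbit in $U\setminus\{z_0\}$ is periodic therefore reduces to whether each circle $\{|w|=r\}$ meeting $V$ lies entirely inside $V$.

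For the ``if'' direction, I would observe that when $V=\R^2$, or $V$ is an open disc $\{|w|<R\}$ centered at $0$, every circle $\{|w|=r\}$ meeting $V$ does lie entirely in $V$, so $f^{-1}(\{|w|=r\})$ is a compact topological circle in $U$. On this circle, $\nabla H_f^{\perp}$ is tangent and nonvanishing (its only zero, by Lemma~\ref{isolatedminimum}, is $z_0$), so the orbit through any of its points must cover the whole circle and be periodic. Since every $z\in U\setminus\{z_0\}$ satisfies $|f(z)|>0$ and therefore lies on such a level set, the center is global.

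For the ``only if'' direction, I would pick $w_0\in V\setminus\{0\}$, put $r:=|w_0|$ and $z:=f^{-1}(w_0)$, and analyze the periodic orbit $\gamma$ through $z$. Since $H_f$ is constant on $\gamma$ one has $f(\gamma)\subset\{|w|=r\}$, and since $f$ is injective, $f|_\gamma$ is a homeomorphism onto a topological circle inside $\{|w|=r\}$. To conclude $f(\gamma)=\{|w|=r\}$, I would combine three ingredients: $f(\gamma)$ is closed in $\{|w|=r\}$ by compactness of $\gamma$; it is open in $\{|w|=r\}\cap V$ because in a flow box around each $z'\in\gamma$ the orbit coincides locally with the level curve of $H_f$, which $f$ maps onto an open arc of $\{|w|=r\}$, and this openness transfers to $\{|w|=r\}$ since $\{|w|=r\}\cap V$ is open in $\{|w|=r\}$; and the circle $\{|w|=r\}$ is connected. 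Hence every circle meeting $V$ is contained in $V$. The set $A:=\{|f(z)|:z\in U\}$ is then a connected, open subset of $[0,\infty)$ containing $0$, so $A=[0,R)$ for some $R\in(0,\infty]$, and this forces $V$ to be $\R^2$ when $R=\infty$ and the open disc of radius $R$ centered at $0$ otherwise.

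The main obstacle I expect is the step $f(\gamma)=\{|w|=r\}$: this is where the global-center hypothesis is converted into a geometric constraint on $V$, and while the ingredients (closedness from compactness, openness from a flow box, connectedness of the ambient circle) are each elementary, they must be assembled carefully and use that $\gamma$ is a genuine periodic orbit rather than merely a piece of a level set. The remaining topology on the target side and the rectification step on the source side are standard.
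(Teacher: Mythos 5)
Your proposal is correct and follows essentially the same route as the paper: both directions reduce to showing that a circle $\{|w|=r\}$ meets $f(U)$ if and only if it is entirely contained in $f(U)$, with the pullback/pushforward of circles under the diffeomorphism $f$ onto its image doing the work, and then identifying $f(U)$ as a rotationally symmetric open connected set containing the origin. The only cosmetic difference is in establishing $f(\gamma)=\{|w|=r\}$: the paper invokes the fact that a topological circle embedded in a circle must be the whole circle, while you run an open--closed--connectedness argument on $\{|w|=r\}$; both are sound.
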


\begin{proof}
From hypothesis and from Lemma \ref{isolatedminimum}, the only singular point of $\nabla H_f^{\perp}$ is $z_0$. 
Thus from the definition of $H_f$ in \eqref{h3} we see that $z_0$ is the only point in the level set $H_f^{-1}\{0\}$, hence the non-singular orbits of $\nabla H_f^{\perp}$ are the connected components of the level sets of $H_f^{-1}\{h\}$ for $h > 0$, $h \in H_f (U)$. 
Clearly $h \in H_f (U)$ if and only if the circle 
$$
S_h = \left\{\sqrt{2 h} e^{i \theta}\ |\ \theta \in \R \right\} 
$$
intersects $f(U)$. 

Assume that $f(U)$ is $\R^2$ or a ball centered at $0$. 
Then $h \in H_f(U)$ if and only if $S_h \subset f(U)$. 
Therefore $H_f^{-1}\{h\}$ is the image of $S_h$ by $f^{-1}$. 
Thus $H_f^{-1}\{h\}$ is a topological circle. 
This proves that the non-singular orbits of $\nabla H_f^\perp$ are periodic. 
Hence the center $z_0$ is global. 

On the other hand, assume that the center $z_0$ is global. 
Let $y \in f(U)$, $y \neq (0,0)$, and set $h_y = H_f (f^{-1}(y))$. 
Since the orbits of $\nabla H_f^{\perp}$ are periodic, it follows that the connected components of $H_f^{-1}\{h_y\}$ are topological circles. 
Hence the image of each of them by $f$ is a topological circle contained in $S_{h_y}$. 
Therefore each image is the circle $S_{h_y}$ (and hence $H_f^{-1}\{h_y\}$ is connected). 
In particular, $S_{h_y} \subset f(U)$. 
Then we have just proved that for each $y \in f(U)$, the circle $S_{h_y}$ containing $y$ is contained in $f(U)$. 
As a consequence 
$$
f(U) = \{(0,0)\} \cup \bigcup_{h\in H_f(U)} S_h. 
$$
The set $H_f(U)$ is an interval of the form $[0, \ell)$, with $\ell = \infty$ or $\ell > 0$. 
Clearly $f(U) = \R^2$ if $\ell = \infty$, while if $\ell \in \R$, $f(U)$ is the open disc with radius $\ell$ centered at $(0,0)$. 
This finishes the proof of the lemma. 
\end{proof}

\begin{lemma}\label{fgtr}
Let $f: U \to \R^2$ be a $C^2$ map satisfying \eqref{det2} such that $\nabla H_f^{\perp}$ has a global center at the point $z_0 \in U$. 
Then $f$ is injective. 
\end{lemma}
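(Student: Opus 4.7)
The plan is to show that if $z_0$ is a global center of $\nabla H_f^{\perp}$, then for every level $h$ in the non-zero part of $H_f(U)$ the restriction of $f$ to the periodic orbit $\gamma_h\subset H_f^{-1}(h)$ is a diffeomorphism onto the circle $S_h=\{|w|=\sqrt{2h}\}$. Since the circles $S_h$ are pairwise disjoint and, by Lemma \ref{isolatedminimum}, $f^{-1}(0)=\{z_0\}$, global injectivity of $f$ on $U$ will follow at once.

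First I would harvest the structural consequences of $\mathcal{P}_{z_0}=U$. By Lemma \ref{isolatedminimum}, $z_0$ is the unique zero of $f$, so $\nabla H_f\neq 0$ on $U\setminus\{z_0\}$ and every orbit of $\nabla H_f^{\perp}$ in $U\setminus\{z_0\}$ is periodic. The nesting of orbits in a period annulus, combined with the ``no interior extremum'' principle for $H_f$, then forces each non-zero level set $H_f^{-1}(h)$ to consist of a single periodic orbit $\gamma_h$: two distinct coexisting orbits at the same level would trap an open annular region $A\subset U$ on whose closure $H_f$ would attain an extremum, either in the interior of $A$ (giving $\nabla H_f=0$ there) or on the whole of $A$ (giving $\nabla H_f\equiv 0$ there), both contradicting $\nabla H_f\neq 0$ off $z_0$. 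The same absence of critical points gives $H_f(U)=[0,\ell)$ for some $\ell\in(0,\infty]$.

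Next, for each $h\in (0,\ell)$, since $Df$ is everywhere invertible, $f|_{\gamma_h}\colon \gamma_h\to\R^2$ is a local diffeomorphism whose image is open and closed in the connected $S_h$, hence all of $S_h$. So $f|_{\gamma_h}\colon \gamma_h\to S_h$ is a covering of some degree $n(h)\geq 1$. In the coordinate $w=f(z)$ the system becomes $\dot w = \det(Df(z))\,J w$, with $J$ the standard symplectic rotation, so the total angular sweep over one period $T(h)$ yields
\[
\pm 2\pi\, n(h)\;=\;\int_0^{T(h)}\det Df\bigl(z_h(t)\bigr)\,dt.
\]
Using a transversal to $\nabla H_f^{\perp}$ through a point of $\gamma_{h_0}$ together with the implicit function theorem, I would get smooth dependence of the orbit and of its first return time on $h$, which makes the right-hand side continuous in $h$; as an integer-valued continuous function on the connected interval $(0,\ell)$, $n$ is therefore constant.

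To pin the constant to $1$ I would invoke the inverse function theorem: $f$ is a $C^2$-diffeomorphism on some open neighborhood $V$ of $z_0$ with $f(V)\supset B(0,\delta)$. For $h<\delta^2/2$, $f|_V^{-1}(S_h)$ is a topological circle contained in $\gamma_h$, and since the only subset of a topological circle homeomorphic to a circle is the whole circle, one forces $\gamma_h\subset V$; hence $f|_{\gamma_h}$ is injective and $n(h)=1$. The hard part will be giving a clean justification of the continuity of $n(h)$ in this purely local setting, with no a priori properness of $f$ available; the Hadamard-type global machinery that the introduction explicitly avoids is precisely what one would reach for. I would sidestep this by rephrasing the continuity as the homotopy invariance of the degree of the continuously varying family of coverings $f\circ z_h\colon \R/T(h)\Z\to S_h$.
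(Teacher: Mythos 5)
Your proof is correct, but it takes a genuinely different route from the paper's. The two arguments share the first step --- that each non-zero level set of $H_f$ is a single periodic orbit $\gamma_h$ --- and your justification of it (on the closed annulus between two coexisting orbits at the same level, $H_f$ is either constant or attains an interior extremum, and either way $\nabla H_f$ vanishes away from $z_0$) is actually cleaner than the paper's, which runs a more involved argument with an auxiliary arc $\lambda$ and a separating orbit $\gamma_3$. After that the proofs diverge. The paper stays bare-handed: it forms the set $T$ of levels on which $f$ fails to be injective on $\gamma_h$, sets $h_\alpha=\inf T>0$, shows $f$ must be injective on $\gamma_{h_\alpha}$ (two local inverses of a radial segment through a would-be double point give disjoint transversal sections, both cut by $\gamma_h$ for $h<h_\alpha$ close to $h_\alpha$, so non-injectivity would propagate below the infimum), and then rules out a sequence of non-injective levels $h_n\downarrow h_\alpha$ using local injectivity. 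You instead identify $f|_{\gamma_h}\colon\gamma_h\to S_h$ as a covering of degree $n(h)$, show $n$ is locally constant, and pin $n\equiv 1$ near the center. The step you flag as delicate --- continuity of $n(h)$ --- is indeed the only point needing care, but it is standard period-annulus machinery (smooth dependence of the orbit and of the first-return time on the level via a transversal section and the implicit function theorem, or equivalently homotopy invariance of the degree of $f/|f|$ on $\gamma_h$); it needs no properness and no Hadamard-type input, so your worry is unfounded. What your route buys is the explicit winding identity $2\pi n(h)=\bigl|\int_0^{T(h)}\det Df(z_h(t))\,dt\bigr|$, which with $n\equiv 1$ immediately recovers Sabatini's isochronicity statement when $\det Df$ is constant; what it costs is the covering-space and degree apparatus that the paper's elementary infimum argument avoids.
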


\begin{proof}
Since $z_0$ is a global center, $z_0$ is the only singular point of $\nabla H_f^{\perp}$, corresponding, according to Lemma \ref{isolatedminimum}, to the level set $H_f^{-1}\{0\}$. 
Therefore for each $h \in H_f(U)$, $h \neq 0$, the level set $H_f^{-1}\{h\}$ is the union of periodic orbits of $\nabla H_f^{\perp}$. 

We \emph{claim that $H_f^{-1}\{h\}$ is connected}. 
Indeed, if $\gamma_1$ and $\gamma_2$ are two distinct periodic orbits of $\nabla H_f^{\perp}$ contained in $H_f^{-1}\{h\}$, they define an open topological annular region $\mathcal{A}$ whose boundary is $\gamma_1 \cup \gamma_2$. 
We take a $C^1$ injective curve $\lambda: [0,1] \to U$ such that $\lambda(0) \in \gamma_1$, $\lambda(1) \in \gamma_2$ and $\lambda((0,1))\subset \mathcal{A}$. 
Since $H_f(\lambda(0)) = H_f(\lambda(1)) = h$, it follows that the function $H_f \circ \lambda$ attains either its global maximum or minimum at a point $t_m \in (0,1)$. 
We consider the periodic orbit $\gamma_3$ of $\nabla H_f^{\perp}$ passing through $\lambda(t_m)$. 
This curve $\gamma_3$ separates $\mathcal{A}$ in two open connected regions $\mathcal{A}_1$ and $\mathcal{A}_2$. 
Clearly each $t \in (0,1)$ such that $\lambda(t) \in \gamma_3$ is an extreme of the function $H_f \circ \lambda$. 
Since the gradient of $H_f$ calculated at each point of $\gamma_3$ is different from zero, it follows that $\lambda((0,1))$ must be entirely contained in $\mathcal{A}_1$ or $\mathcal{A}_2$. 
But this is a contradiction, as the curve $\lambda$ connects $\gamma_1$ and $\gamma_2$. 
This contradiction proves the claim. 

We denote by $\gamma_h$ the orbit $H_f^{-1} \{h\}$. 
The claim proves in particular that $0 < h_1 < h_2$ if and only if the curve $\gamma_{h_1}$ is contained in the bounded region whose boundary is $\gamma_{h_2}$. 

\smallskip

To complete the proof it is enough to show that $f$ is injective in $\gamma_h$ for each $h \in H_f(U)$, $h \neq 0$. 
We consider the set 
$$
T = \{ h\in H_f(U),\ h\neq 0\ \ |\ f \textnormal{ is not injective in } \gamma_h \}. 
$$
It is enough to prove that $T$ is empty.

Suppose on the contrary that $T$ is not empty. 
Since $H_f(U) = [0, \ell)$, with $\ell = \infty$ or $\ell > 0$, the set $T$ is bounded from bellow. 
We let $h_{\alpha}$ be the infimum of $T$. 
Since $f$ is locally injective in $z_0$, it follows that $h_{\alpha} > 0$. 

We \emph{claim that $f$ is injective in $\gamma_{h_{\alpha}}$}. 
Indeed, if on the contrary there exist $a, b\in \gamma_{h_{\alpha}}$ with $a \neq b$ and  $f(a) = f(b)$, we consider neighborhoods $U_a$, $U_b$ and $V$ of $a$, $b$ and $f(a)$, respectively, with $U_a \cap U_b = \emptyset$, such that the maps $f|_{U_a} : U_a \to V$ and $f|_{U_b} : U_b \to V$ are diffeomorphisms. 
We let $C$ be the intersection of the segment connecting $(0,0)$ to $f(a)$ with the open set $V$, and we define the curves $C_a = f|_{U_a}^{-1}(C)$ and $C_b = f|_{U_b}^{-1}(C)$. 
The curves $C_a$ and $C_b$ are transversal sections to the flow of $\nabla H_f^{\perp}$, and both of them are contained in the compact region bounded by the curve $\gamma_{h_{\alpha}}$. 
In particular, for $h < h_{\alpha}$ near enough $h_{\alpha}$, the orbit $\gamma_h$ will cut $C_a$ and $C_b$. 
But then $f \left( C_a \cap \gamma_h \right) = f \left( C_b \cap \gamma_h \right)$, and hence $f$ is not injective in $\gamma_h$. 
This contradiction proves the claim.

Now from the definition of $h_{\alpha}$, there exists a sequence $\{h_n\}$, $h_n > h_{\alpha}$, that converges to $h_{\alpha}$ such that $f$ is not injective in $\gamma_{h_n}$. 
This means that for each $n$ there exist $a_n, b_n \in \gamma_{h_n}$ such that $a_n \neq b_n$ and $f(a_n) = f(b_n)$. 
Since $\{a_n\}$ and $\{b_n\}$ are contained in the compact set $\cup_n \gamma_{h_n}$, we can assume without loss of generality that there exist $a, b \in U$ such that $a_n \to a$ and $b_n \to b$ as $n \to \infty$. 
Since $h_n \to h_{\alpha}$, it follows that $a, b \in \gamma_{h_{\alpha}}$ and $f(a) = f(b)$. 
From the above claim, we have $a = b$. 
But as $f$ is locally injective in $a$, we obtain a contradiction with the assumptions that $a_n \neq b_n$, $f(a_n) = f(b_n)$, and $a_n \to a$ and $b_n \to b$. 
This contradiction proves that $T$ is empty and the lemma follows. 
\end{proof}

\begin{proof}[Proof of Corollary \ref{cmain}]
Let $g : \mathcal{P}_{z_0} \to \R^2$ be the map $f$ restricted to the open set $\mathcal{P}_{z_0}$. 
The center $z_0$ of the vector field $\nabla H_g^{\perp}$ defined in $\mathcal{P}_{z_0}$ is a global center. 
Thus from Theorem \ref{main} it follows that $g$ is injective and $g \left( \mathcal{P}_{z_0} \right) = \R^2$ or an open ball centered at the origin. 
This proves statement (ii) of the corollary and that $f$ is injective in $\mathcal{P}_{z_0}$. 

Let $F = \overline{\mathcal{P}_{z_0}}\backslash \mathcal{P}_{z_0}$ the boundary of $\mathcal{P}_{z_0}$ in $U$. 
Since for each $z \in F$ and for each $h \in H_f\left( \mathcal{P}_{z_0} \right)$ we have $H_f(z) > h$, it is enough to prove that $f$ is injective in $F$. 
This is quite similar to the last claim in the proof of Lemma \ref{fgtr}, therefore we give only the main idea of the proof. 
Suppose on the contrary the existence of $a, b \in F$, $a \neq b$, such that $f(a) = f(b)$. 
Let $U_a$, $U_b$ and $V$ neighborhoods of $a$, $b$ and $f(a)$, respectively, with $U_a \cap U_b = \emptyset$, such that the maps $f|_{U_a} : U_a \to V$ and $f|_{U_b} : U_b \to V$ are diffeomorphisms. 
Then acting as in the above proof, it is simple to get a contradiction with the injectivity of $f$ in $\mathcal{P}_{z_0}$. 
\end{proof}

\begin{proof}[Proof of Corollary \ref{cmain2}]
From Corollary \ref{cmain}, $\mathcal{P}_{z_0}$ satisfies (i) and (ii). 

Given an open connected set $V \subset U$ satisfying (i) and (ii), we apply Theorem \ref{main} to $f|_V : V \to \R^2$ obtaining that the orbits of $\nabla H_f^{\perp}$ intersecting $V$ are periodic and are contained in $V$. 
Thus $V \subset \mathcal{P}_{z_0}$. 
This finishes the proof of the corollary. 
\end{proof}

\section{Examples}\label{exsec}

\begin{example}
Let $f = (f_1, f_2) : \R^2 \to \R^2$ be defined by $f_1(x,y) = e^x - 1$, $f_2(x,y) = y$. 
We have $\det Df(x,y) = e^x$, hence $f$ satisfies \eqref{det2}. 
Moreover, $f$ is clearly injective, the image of $f$ is the set $(-1, \infty) \times \R$ and $f(0,0) = (0,0)$. 

From Theorem \ref{main}, the center $(0,0)$ is not global. 
From Corollary \ref{cmain2}, the image of its period annulus $\mathcal{P}_{(0,0)}$ under $f$ is the open ball centered at $(0,0)$ with radius $1$, that we denote by $B_1$. 
Thus $\mathcal{P}_{(0,0)} = f^{-1} \left(B_1\right) = \{ (x,y) \in \R^2\ |\ y^2 < e^x (2 - e^x) \}$. 
\end{example}

In the next example we present a global injective non-polynomial map $f$ in $\R^2$ with $f(0,0) = (0,0)$ which produces a polynomial Hamiltonian $H_f$. 
The center $(0,0)$ is a non-global isochronous center although $f$ is globally injective. 
\begin{example}\label{Nopolynomial}
Let $f = (f_1, f_2)$ be defined by
$$
f_1(x,y) = \frac{x}{\sqrt{1 + x^2}}, \ \ \ \ f_2(x,y) = \frac{x^2 + \left(1 + x^2\right)^2 y}{\sqrt{1 + x^2}}. 
$$
It is easy to see that the Jacobian determinant of $f$ is constant and equal to $1$ and that $(0,0)$ is the only zero of $f$. 
Thus $(0,0)$ is an isochronous center of $\nabla H_f^\perp$. 
Moreover, observe that 
$$
H_f(x,y) = \frac{\left(1 + x^2\right)^3}{2} y^2 + x^2 \left(1 + x^2\right) y + \frac{x^2}{2} 
$$ 
is a polynomial such that $H_f^{-1}\{1/2\}$ is an unbounded disconnected set. 
Hence $(0,0)$ is not a global center. 
This example has already appeared in \cite{CMV}. 

In Figure \ref{P4} we use the program \emph{P4}, see \cite{DLA}, to draw the separatrix skeleton of the Poincar\'e compactification of the vector field $\nabla H_f^\perp$ in the Poincar\'e disc. 
Observe that the infinite singular points in the $y$ direction are formed by two degenerate hyperbolic sectors. 
And the infinite singular points in the $x$ direction are formed by two non-degenerate hyperbolic sectors and two parabolic sectors. 
See section \ref{final}. 
\begin{figure}[!h]
\begin{center}
\includegraphics[scale=.4]{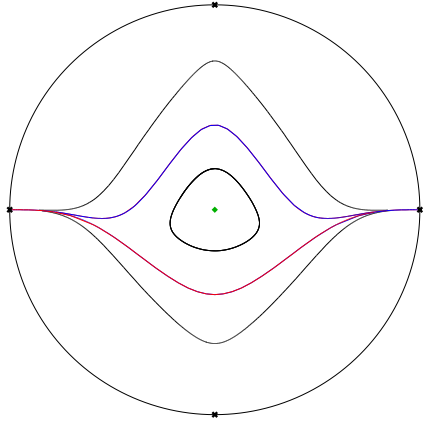}
\end{center}
\caption{Phase portrait of $\nabla H_f^\perp$ in the Poincar\'e disc.}
\label{P4}
\end{figure}
\end{example}

\begin{example}
Let $f = (f_1, f_2) : \R^2 \to \R^2$ be defined by $f_1(x,y) = e^x \cos y - 1$, $f_2(x,y) = e^x \sin y$. 
We have $\det D f(x,y) = e^{2 x}$. 
Moreover, the points $z_k = (0, 2 k \pi)$, $k \in \Z$, are the points that annihilate $f$. 
Therefore, the centers of $\nabla H_f^{\perp}$ are the points $z_k$, $k \in \Z$. 

We will estimate the period annulus $\mathcal{P}_{z_k}$ of each center $z_k$. 

Observe that $f(\R^2) = \R^2 \backslash \{(-1, 0)\}$, thus the biggest ball centered at $(0,0)$ contained in $f(\R^2)$ is $B_1$. 
In order that a point $(x,y)$ be such that $f(x,y) \in B_1$, it is necessary that $\cos y > 0$, which happens in the intervals $\left( (4 k - 1)\pi/2, (4 k + 1) \pi/2 \right)$, $k \in \Z$. 

It is easy to see that $f$ is injective in each of the sets $\R \times ((4 k - 1) \pi/2, (4 k + 1)\pi/2 )$, $k \in \Z$. 

Thus the exact set $\mathcal{P}_{z_k}$ is from Corollary \ref{cmain2} the set satisfying $f_1(x,y)^2 + f_2(x,y)^2 < 1$, with $y\in \left( (4 k - 1)\pi/2, (4 k + 1) \pi/2 \right)$. 
Straightforward calculations show that this is the set 
$$
\mathcal{P}_{z_k} = \left\{ (x, y) \in \R^2\ |\ e^x < 2 \cos y,\ (4 k - 1)\pi < 2 y < (4 k + 1) \pi \right\}. 
$$ 

Since $2 H_f = f_1^2 + f_2^2$, it follows that the connected components of the level sets $H_f^{-1}\{h\}$ with $h < 1/2$ give the periodic orbits of each center, and the connected components of the level set $H_f^{-1}\{1/2\}$ give the boundary of the period annulus of each center. 
Finally, it is simple to see that the level sets $H_f^{-1}\{h\}$ with $h > 1/2$ are connected. 
An overview of the level sets of $H_f$ in the plane can be seen in Figure \ref{fig1}. 
\begin{figure}[!h]
\begin{center}
\includegraphics[scale=.9]{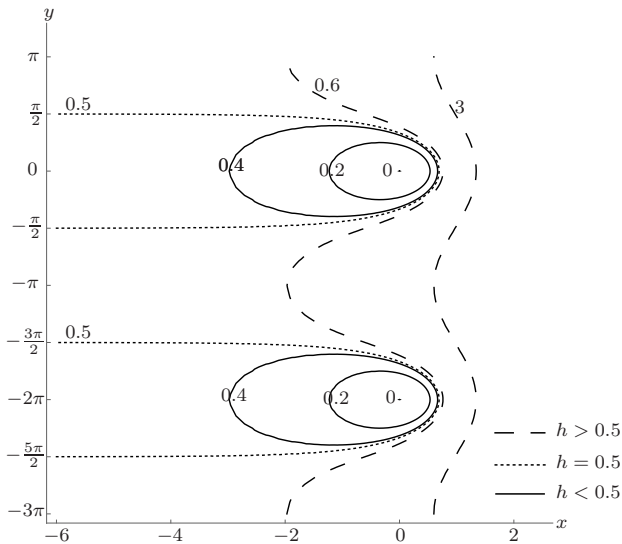}
\end{center}
\caption{The level sets of $H_f$.}
\label{fig1}
\end{figure}
\end{example}

Next example presents a non-injective polynomial map in $\R^2$ producing two centers. 

\begin{example} 
Let $g = (g_1, g_2): \R^2\to\R^2$ be the Pinchuk map as defined in \cite{C}. 
The image $g(\R^2)$ does not contain the points $(0,0)$ and $(-1, -163/4)$. 
Moreover, all the points of the curve $\left(P(s) , Q(s) \right)$ defined by 
$$
P(s) = s^2 - 1,\ \ \ \ Q(s) =  - 75 s^5 + \frac{345}{4} s^4 - 29 s^3 + \frac{117}{2} s^2 - \frac{163}{4}, 
$$
$s\in\R$, with the exception of $(0,0)$ and $(-1, -163/4)$, have exactly one inverse image under $g$. 
All the other points of $\R^2$ have two inverse images. 
The curve $\left( P(s), Q(s) \right)$ crosses the $y$-axis in $y = 0$ and in $y = 208$. 
For details on these results, see \cite{C}. 

We consider $f : \R^2 \to \R^2$ defined by translating the Pinchuk map as follows 
$$
f(x,y) = \big(g_1(x,y), g_2(x,y) - 200 \big). 
$$
Let $z_0^1$ and $z_0^2$ be the two elements of the set $f^{-1}\{(0,0)\} = g^{-1} \{(0,200)\}$. 
From Lemma \ref{isolatedminimum} the points $z_0^1$ and $z_0^2$ are centers of $\nabla H_f^{\perp}$. 

Since $(0, - 200)$ and $(-1, -163/4 - 200 )$ are the only points not contained in $f(\R^2)$, the greatest open ball centered at $(0,0)$ contained in $f(\R^2)$ is $B_{200}$. 
Moreover, from the properties of the Pinchuk map mentioned above, there exists an entire curve with just one inverse image under $f$ in this ball. 
All the other points have two pre-images. 
We consider $B_r$ the greatest ball centered at $(0,0)$ such that all its points have two inverse images under $f$. 
The inverse image of $B_r$ gives two open sets. 
One of them, say the one containing $z_0^1$, is the entire period annulus of the center $z_0^1$. 
The other open set is properly contained in the period annulus of the center $z_0^2$. 
This period annulus is mapped bijectively onto the open ball $B_{200}$. 
\end{example}

\section{The polynomial case}\label{final}

In this section given a polynomial vector field $\mathcal{X}$, we denote by $p(\mathcal{X})$ the \emph{Poincar\'{e} compactification of $\mathcal{X}$}. 
For details we refer the reader to chapter 5 of \cite{DLA}. 
As usual we call the singular points of $p(\mathcal{X})$ located in the equator of the Poincar\'e sphere $\mathbb{S}^2$ the \emph{infinite singular points} of $\mathcal{X}$. 
The other singular points we call \emph{finite singular points}. 

For a center $z_0$ of a polynomial vector field $\mathcal{X}$ we use the following classification of Conti, see \cite{Co}. 
We  say that the center $z_0$ is of \emph{type A} if $\partial \mathcal{P}_{z_0} = \emptyset$, i.e. the center is global, of \emph{type B} if $\partial \mathcal{P}_{z_0} \neq \emptyset$ and $\partial \mathcal{P}_{z_0}$ is unbounded and does not contain finite singular points, of \emph{type C} if $\partial \mathcal{P}_{z_0}$ contains finite singular points and is unbounded, and of \emph{type D} if $\partial \mathcal{P}_{z_0}$ contains finite singular points and is bounded. 
We remark that $\partial \mathcal{P}_{z_0}$ can never be a periodic orbit $\gamma$ of $\mathcal{X}$, otherwise let $\pi$ be the return Poincar\'{e} map defined in a transversal section $S$ through $\gamma$. 
Since $\pi$ is analytic and it is the identity map in the portion of $S$ contained in $\mathcal{P}_{z_0}$, it follows that it must be the identity in $S$, a contradiction with the fact that $\gamma$ is the boundary of $\mathcal{P}_{z_0}$. 

Let $q$ be an infinite singular point of the polynomial vector field $\mathcal{X}$ and $h$ be a hyperbolic sector of $q$ in the Poincar\'e sphere. 
We say that $h$ is \emph{degenerate} if its two separatrices are contained in the equator of $\mathbb{S}^2$. 
Otherwise we say that $h$ is \emph{non-degenerate}. 

\smallskip

In the following we give more equivalences to the injectivity of $f$ in case the Hamiltonian $H_f$ is polynomial. 

\begin{theorem}\label{polynomial}
Let $f : \R^2 \to \R^2$ be a $C^2$ map satisfying \eqref{det2} and $z_0 \in \R^2$ such that $f(z_0) = (0,0)$. 
If $H_f$ is polynomial the following statements are equivalent: 
\begin{itemize}
\item[(a)] $f$ is injective and $f\left(\R^2\right) = \R^2$ or $f\left(\R^2\right)$ is an open ball centered at $(0,0)$. 

\item[(b)] The center $z_0$ of $\nabla H_f^{\perp}$ is of type A. 

\item[(c)] The center $z_0$ of $\nabla H_f^{\perp}$ is not of type B. 

\item[(d)] The Hamiltonian vector field $\nabla H_f^\perp$ has no infinite singular points or each of them is formed by two degenerate hyperbolic sectors. 
\end{itemize}
\end{theorem}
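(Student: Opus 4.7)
The plan is to combine Theorem \ref{main} (which gives (a) $\iff$ (b) directly, specialised to $U = \R^2$) with separate proofs of (b) $\iff$ (c) and (b) $\iff$ (d). The direction (b) $\Rightarrow$ (c) is immediate. For (c) $\Rightarrow$ (b), I would exclude Conti's types C and D by showing that $\partial \mathcal{P}_{z_0}$ contains no finite singular point. By Lemma \ref{isolatedminimum}, every finite singularity of $\nabla H_f^{\perp}$ is a non-degenerate center, so if a center $z_1 \neq z_0$ lay on $\partial \mathcal{P}_{z_0}$, the periodic orbits of its own period annulus $\mathcal{P}_{z_1}$ would meet $\mathcal{P}_{z_0}$ in an open set, and then $\mathcal{P}_{z_0} \cup \mathcal{P}_{z_1}$ would be an open connected set whose non-singular orbits are all periodic yet which contains two singular points, contradicting the maximality in the definition of period annulus. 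Hence ``not type B'' forces ``type A''.

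For (b) $\Rightarrow$ (d), assume $z_0$ is a global center, so every orbit of $\nabla H_f^{\perp}$ in $\R^2 \setminus \{z_0\}$ is periodic. If an infinite singular point $q$ admitted an elliptic, parabolic, or non-degenerate hyperbolic sector, such a sector would contain respectively orbits limiting to $q$ in both time directions, orbits limiting in one time direction, or a separatrix lying in the finite plane with $q$ in its limit set; in all three cases this yields an unbounded orbit of $\nabla H_f^{\perp}$ in $\R^2$, contradicting periodicity. So only degenerate hyperbolic sectors may occur at $q$, and since any subdivision of a hemispheric neighborhood of $q$ into more than one sector would require a separator orbit accumulating at $q$ from the finite plane (again ruled out), exactly one such sector fills each hemisphere, giving two on $\mathbb{S}^2$. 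For (d) $\Rightarrow$ (b) I argue contrapositively: if $z_0$ is not of type A, then by the already-established (b) $\iff$ (c) it is of type B, so $\partial \mathcal{P}_{z_0}$ is nonempty, unbounded, singularity-free, and sits on a single level $H_f^{-1}\{\ell\}$ with $0 < \ell < \infty$. Writing $H_f = H_{f,d} + H_{f,d-1} + \cdots$ with $H_{f,d}$ the top-degree homogeneous component, an unbounded branch of this polynomial level set can escape to infinity only in directions $(\cos\theta,\sin\theta)$ with $H_{f,d}(\cos\theta,\sin\theta) = 0$, since $H_f(r\cos\theta, r\sin\theta) \sim r^d H_{f,d}(\cos\theta,\sin\theta)$ as $r \to \infty$. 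By Euler's identity these directions coincide with the zeros of $x(H_{f,d})_x + y(H_{f,d})_y$, which via the Poincar\'e compactification are precisely the infinite singular points of $\nabla H_f^{\perp}$. Therefore a boundary orbit accumulates at some infinite singularity $q$ from the interior of the disc, forcing $q$ to carry a sector (a separatrix of a hyperbolic sector, or a parabolic or elliptic sector) that is not degenerate hyperbolic, contradicting (d).

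The main obstacle I anticipate lies in the direction (d) $\Rightarrow$ (b): one must justify that an unbounded real algebraic level curve $H_f^{-1}\{\ell\}$ accumulates on the Poincar\'e equator only at infinite singular points (and not at regular equator points), and that each such accumulation is in fact realised by an orbit genuinely limiting at $q$ with the sector geometry needed to exclude degenerate hyperbolicity. Making the passage from the asymptotic directions of the polynomial level curve to the local sector structure of $\nabla H_f^{\perp}$ at the infinite singular point precise, using the coordinate charts of the Poincar\'e compactification, is the technical heart of the argument.
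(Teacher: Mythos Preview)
Your overall architecture matches the paper's proof exactly: (a)$\iff$(b) via Theorem~\ref{main}, (b)$\iff$(c) by ruling out Conti types C and D through Lemma~\ref{isolatedminimum}, and the equivalence with (d) by relating unbounded orbits to non-degenerate sectors at infinity. The paper is in fact much terser than you are; for (d)$\Rightarrow$(c) it writes only that a type~B center forces an unbounded orbit ``and thus there exists an infinite singular point without a degenerate hyperbolic sector,'' without the level-set and Euler-identity analysis you supply. Your identification of the asymptotic directions of $H_f^{-1}\{\ell\}$ with the zeros of $H_{f,d}$, and hence with the infinite singularities of $\nabla H_f^{\perp}$, is correct and is precisely the content the paper is taking for granted.

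One small imprecision: your exclusion of types C and D via ``$\mathcal{P}_{z_0}\cup\mathcal{P}_{z_1}$ contradicts maximality'' does not quite work as stated, since the definition of $\mathcal{P}_{z_0}$ requires $W\setminus\{z_0\}$ (with only \emph{one} point removed) to be filled by periodic orbits, and your enlarged set contains the second singularity $z_1$. The cleaner argument, implicit in the paper, is purely topological: a small periodic orbit $\gamma\subset\mathcal{P}_{z_1}$ meets $\mathcal{P}_{z_0}$ and hence lies entirely in it; but every periodic orbit in $\mathcal{P}_{z_0}$ bounds a disc containing $z_0$ and contained in $\mathcal{P}_{z_0}$, so $z_1$ (which $\gamma$ encircles) would lie in $\mathcal{P}_{z_0}$, contradicting that $\mathcal{P}_{z_0}\setminus\{z_0\}$ contains no singular points. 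With this adjustment your proof is complete and essentially identical to the paper's.
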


\begin{proof}
Statements (a) and (b) are equivalent from Theorem \ref{main}. 
Moreover, since from Lemma \ref{isolatedminimum} the finite singular points of $\nabla H_f^\perp$ are centers, it follows that $\partial \mathcal{P}_{z_0}$ does not contain finite singular points. 
Therefore $\nabla H_f^\perp$ can not have centers of type C or D. 
Hence (b) is also equivalent to (c). 
It is also clear that (b) implies (d). 

Finally if the center $z_0$ is of type B, it follows that $\nabla H_f^\perp$ has at least one unbounded orbit, and thus there exist an infinite singular point without a degenerate hyperbolic sector. 
Hence (d) implies (c). 
This finishes the proof. 
\end{proof}

\begin{remark}
We remark that the assumption on the shape of $f\left(\R^2\right)$ in statement (a) of Theorem \ref{polynomial} is essential in general. 
Recall the above Example \ref{Nopolynomial}. 

If $f$ is assumed to be polynomial, then we can drop this hypothesis, as polynomial injective maps are onto, from \cite{BR}. 
\end{remark}

\section*{Acknowledgements}
The first author is partially supported by a BPE-FAPESP grant number 2014/ 26149-3. 
The second author is partially supported by a MINECO grant number MTM2013-40998-P, an AGAUR grant number 2014SGR 568 and two FP7-PEOPLE-2012-IRSES grants numbers 316338 and 318999. 
Both authors are also partially supported by a CAPES CSF--PVE grant 88881. 030454/ 2013-01 from the program CSF-PVE.


\begin{thebibliography}{9}
\bibitem{BR}{\sc A. Bia{\l}ynicki-Birula and M. Rosenlicht},
\emph{Injective morphisms of real algebraic varieties}, 
Proc. Amer. Math. Soc. {\bf 13} (1962), 200--203. 

\bibitem{BGL}{\sc F. Braun, J. Gin\'{e} and J. Llibre}, 
\emph{A sufficient condition in order that the real Jacobian conjecture in $\R^2$ holds}, 
J. Differential Equations \textbf{260} (2016), 5250--5258. 

\bibitem{C}{\sc L.A. Campbell}, 
\emph{The asymptotic variety of a Pinchuk map as a polynomial curve}, 
Appl. Math. Lett. \textbf{24} (2011), 62--65. 

\bibitem{CMV}{\sc A. Cima, F. Ma\~nosas and J. Villadelprat}, 
\emph{Isochronicity for Several Classes of Hamiltonian Systems}, 
J. Differential Equations \textbf{157} (1999), 373--413.

\bibitem{Co}{\sc R. Conti}, 
\emph{Centers of planar polynomial systems. A review}, 
Matematiche (Catania) \textbf{53} (1998), 207--240 

\bibitem{DLA}{\sc F. Dumortier, J. Llibre and J.C. Art\'es},
{\it Qualitative theory of planar differential systems},
Universitext, Springer--Verlag, 2006.

\bibitem{G}{\sc L. Gavrilov}, 
\emph{Isochronicity of plane polynomial Hamiltonian systems}, 
Nonlinearity \textbf{10} (1997), 433--448. 

\bibitem{MV}{\sc F. Ma\~{n}osas and J. Villadelprat}, 
\emph{Area-preserving normalizations for centers of planar Hamiltonian systems}, 
J. Differential Equations \textbf{179} (2002), 625--646. 

\bibitem{S1}{\sc M. Sabatini},
\emph{A connection between isochronous Hamiltonian centres and the Jacobian conjecture}, 
Nonlinear Anal. \textbf{34} (1998), 829--838.

\bibitem{S2}{\sc M. Sabatini}, 
\emph{Commutativity of flows and injectivity of nonsigular mappings}, 
Ann. Polon. Math. {\bf 76} (2001), 159--168. 
\end{thebibliography}
\end{document}